\begin{document}

\title{ Generalizations of ${r}$-ideals of commutative rings
}


\author{Emel Aslankarayigit Ugurlu        
}


\institute{E. Aslankarayigit Ugurlu \at
             Department of Mathematics, Marmara University, Istanbul, Turkey. \\ \email{emel.aslankarayigit@marmara.edu.tr}           
     }

\date{Received: 29 February 2020 / Accepted: date}

\maketitle

\begin{abstract}
In this study, we present the generalization of the concept of $r$-ideals in
commutative rings with nonzero identity. Let $R\ $be a commutative ring with
$0\neq1$ and $L(R)$ be the lattice of all ideals of $R.\ $Suppose that
$\phi:L(R)\rightarrow L(R)\cup\left\{  \emptyset\right\}  $ is a function. A
proper ideal $I\ $of $R\ $is called a $\phi-r$-ideal of $R\ $if whenever
$ab\in I$ and $Ann(a)=(0)$ imply that $b\in I$ for each $a,b\in R.$ In
addition to giving many properties of $\phi-r$-ideal, we also examine the
concept of $\phi-r$-ideal in trivial ring extension and use them to
characterize total quotient rings.

\keywords{$r$-ideal \and $\phi-$prime ideal \and $\phi-r$-ideal}
\subclass{13A18 \and 54C40}
\end{abstract}

\section{Introduction}
\label{intro}
In this article, we focus only on commutative rings with nonzero identity and
nonzero unital modules. Let $R$ always denote such a ring and $M\ $denote such
an $R$-module. $L(R)$ denotes the lattice of all ideals of $R.$ Also the
radical of $I$ is defined as $\sqrt{I}:=\{r\in R$ $|$ $r^{k}\in I$ for some
$k\in%
\mathbb{N}
\}$.

\bigskip

A proper ideal $I$ of a commutative ring $R$ is \textit{prime }if whenever
$a_{1},a_{2}\in R$ with $a_{1}a_{2}\in I$, then $a_{1}\in I$ or $a_{2}\in I,$
\cite{atiyah}. In 2003, the authors said that if whenever $a_{1},a_{2}\in R$
with $0_{R}\neq a_{1}a_{2}\in I$, then $a_{1}\in I$ or $a_{2}\in I,$ a proper
ideal $I$ of a commutative ring $R$ is \textit{weakly prime }\cite{AS}. In
\cite{BS}, Bhatwadekar and Sharma defined a proper ideal $I$ of an integral
domain $R$ as \textit{almost prime (resp. $n$-almost prime)} if for
$a_{1},a_{2}\in R$ with $a_{1}a_{2}\in I-I^{2}$, (resp. $a_{1}a_{2}\in
I-I^{n},n\geq3$) then $a_{1}\in I$ or $a_{2}\in I$. Later, Anderson and
Batanieh, in \cite{AB}, introduced a concept which covers all the previous
definitions in a commutative ring $R$ as following: let $\phi:L(R)\rightarrow
L(R)\cup\{\emptyset\}$ be a function, where $L(R)$ denotes the set of all
ideals of $R$. A proper ideal $I$ of a commutative ring $R$ is called $\phi
$-\textit{prime} if for $a_{1},a_{2}\in R$ with $a_{1}a_{2}\in I-\phi(I)$,
then $a_{1}\in I$ or $a_{2}\in I.$ They defined the map $\phi_{\alpha
}:L(R)\rightarrow L(R)\cup\{\emptyset\}$ as follows:%

\begin{align*}
\phi_{\emptyset}(I)  &  =\emptyset\ \ \ \ \ \ \ \ \ \ \ \ \text{prime ideal}\\
\phi_{0}(I)  &  =0_{R}\ \ \ \ \ \ \ \ \ \ \text{weakly prime ideal}\\
\phi_{2}(I)  &  =I^{2}\ \ \ \ \ \ \ \ \ \ \ \text{almost prime ideal}\\
\phi_{n}(I)  &  =I^{n}\ \ \ \ \ \ \ \ \ \ \ n\text{-almost prime ideal
	\ }(n\geq2)\\
\phi_{\omega}(I)  &  =\cap_{n=1}^{\infty}I^{n}\ \ \ \ \omega\text{-prime
	ideal}\\
\phi_{1}(I)  &  =I\ \ \ \ \ \ \ \ \ \ \ \ \ \text{any ideal.}%
\end{align*}

\bigskip

\bigskip\ The principal ideal generated by $a\in R$ is denoted by $<a>.$ For a
subset $S$ of $R$ and an ideal $I$ of $R,$ we define $(I:_{R}S):=\{r\in
R:rS\subseteq I\}.$ In particular, we use $Ann(S)$ instead of $(0:_{R}S).$
Also, for any $a\in R$ and any ideal $I$ of $R,$ we use $(I:a)$ and
$Ann(a)\ $to denote $(I:_{R}\{a\})$ and$\ Ann(\{a\}),$ respectively. An
element $a\in R$ is called a regular (resp., zerodivisor) element if
$Ann(a)=(0)$ $($resp., $Ann(a)\neq(0)).$ The set of all regular (resp.,
zerodivisor) elements of $R$ is denoted by $r(R)$ $($resp., $zd(R)).$ Let $S$
be a multiplicatively closed of $R.\ S^{-1}R$ denotes the quotient ring of
$R\ $at $S.\ $Particularly,$\ (r(R))^{-1}R=T(R)\ $is the total quotient ring
of $R.\ $A ring $R\ $is said to be a total quotient ring if $R=T(R),\ $%
equaivalently, every element $a\in R\ $is either zerodivisor or unit. An ideal
$I$ of $R$\ is called a regular ideal if it contains at least a regular
element. For a subset $S$ of $R,$ $a\in S$ is called a von Neumann regular
elemet if there is $b\in S$ such that $a=a^{2}b.$ If all elements of $R$ are
von Neumann regular element, $R$ is called a von Neumann regular ring.
Similarly, if all elements in a subset $S$ (resp., an ideal $I$) of $R$ are
von Neumann regular element, $S$ \ (resp., $I$) is called a von Neumann
regular subset (resp., ideal). An ideal $I$ is said to be pure ideal if for
every $a\in I$ there is $b\in I$ such that $a=ab.$ A ring $R\ $is said to
satisfy strongly annihilator condition (briefly, s. a. c.) if for every
finitely generated (briefly, f. g.) ideal $I$ of $R$ there is an element $b\in
I$ such that $Ann(I)=Ann(b).$ For more information about the above notions, we
refer to \cite{Gil}, \cite{Huc}.

\bigskip In 2015, R. Mohamadian \cite{rostam} present the notion of $r$-ideals
in commutative rings with nonzero identity as follows: an ideal $I$ is a
commtative ring with identity $R$ is called $r$-ideal (resp., $pr$-ideal), if
whenever $ab\in I$ and $a$ is regular element imply that $b\in I$ (resp.,
$b^{n}\in I,$ for some natural number $n),$ for each $a,b\in R.$

\bigskip

In this paper, our aim is to introduce the generalization of the concepts of
$r$-ideal, $pr$-ideal, pure ideal and von Neumann regular ideal in commutative
rings with nonzero identity. For this, firstly with Definition \ref{def 1} we
give the definition of $\phi-r$-ideal. Similarly, in Definition \ref{def 1_1}
we give the definitions of $\phi-pr$-ideal, $\phi-$pure ideal and $\phi-$von
Neumann regular ideal in $R.$ Then we investigate the basic properties of
$\phi-r$-ideal of $R,$ see Proposition \ref{pro basic} and Remark
\ref{remark}$.$ In Theorem \ref{the cha}, we give a method for constructing
$\phi-r$-ideal in a commutative ring with non-zero identity. Also, if
$\phi(I)$ is an $r$-ideal of $R,$ we prove that $I$ is a $\phi-r$-ideal of
$R\Leftrightarrow$ $I$ is $r$-ideal of $R,$ see Theorem \ref{the equ}. Let $I$
be a proper ideal of $R$ with $\sqrt{\phi(I)}=\phi(\sqrt{I}).$ If $I$ is a
$\phi-r$-ideal of $R$, then $\sqrt{I}$ is a $\phi-r$-ideal of $R.$ With
Theorem \ref{the rad}, under the condition $\sqrt{\phi(I)}=\phi(\sqrt{I}),$ we
obtain if $I$ is a $\phi-r$-ideal of $R$, then $\sqrt{I}$ is a $\phi-r$-ideal
of $R.$ In Theorem \ref{the union}, we show that if $\phi$ preserves the
order, then $\underset{k\in\Delta}{\bigcup}I_{k}$ is a $\phi-r$-ideal of $R$
where $\{I_{k}\}_{k\in\Delta}$ is a collection of $\phi-r$-ideal of $R.$
Moreover, in Theorem \ref{the ide}, we examine the concept of $\phi-r$-ideal
in $R(+)M,$ that is, the trivial ring extension, where $M\ $is an $R$-module.
Also, we examine the notion of $\phi-r$-ideal in $S^{-1}R,$ where $S$ is a
multiplicatively subset of $R$, see Theorem \ref{the loc}. Finally, we
characterize total quotient rings in terms of $\phi-r$-ideals.

\section{$\phi-r$-ideals of Commutative Rings}
\label{sec:1}

\begin{definition}
	\label{def 1}Let $R$ be a commutative ring with nonzeo identity and $I$ be a
	proper ideal of $R$. Let $\phi:L(R)\rightarrow L(R)\cup\{\emptyset\}$ be a
	function. If $ab\in I-\phi(I)$ and $Ann(a)=(0)$ imply that $b\in I$ for each
	$a,b\in R,$ then $I$ is called a $\phi-r$-ideal.
\end{definition}

\begin{definition}
	\label{def 2}If we define the map $\phi_{\alpha}:L(R)\rightarrow
	L(R)\cup\{\emptyset\}$ as the followings,
\end{definition}

\begin{align*}
\phi_{\emptyset}(I)  &  =\emptyset\ \ \ \ \ \ \ \ \ \ \ \phi_{\emptyset
}-r\text{-ideal (}r\text{-ideal)}\\
\phi_{0}(I)  &  =0_{R}\ \ \ \ \ \ \ \ \ \phi_{0}-r\text{-ideal (weakly
}r\text{-ideal)}\\
\phi_{1}(I)  &  =I\text{ \ \ \ \ \ \ \ \ \ \ }\phi_{1}-r\text{- ideal (any
	ideal)}\\
\phi_{2}(I)  &  =I^{2}\ \ \ \ \ \ \ \ \ \ \phi_{2}-r\text{-ideal (almost
}r\text{-ideal) }\\
\phi_{n}(I)  &  =I^{n}\ \ \ \ \ \ \ \ \ \ \phi_{n}-r\text{-ideal
	(}n\text{-almost }r\text{-ideal)\ }(n\geq2)\\
\phi_{\omega}(I)  &  =\cap_{n=1}^{\infty}I^{n}\ \ \ \phi_{\omega
}-r\text{-ideal (}\omega-r\text{-ideal) }%
\end{align*}

\bigskip Observe that $2$-almost $r$-ideals are exactly almost $r$-ideals.

\begin{definition}
	\label{def 1_1}Let $R$ be a commutative ring with nonzero identity and $I$ be
	a proper ideal of $R$. Let $\phi:L(R)\rightarrow L(R)\cup\{\emptyset\}$ be a function.
	
	\begin{enumerate}
		\item If $ab\in I-\phi(I)$ and $Ann(a)=(0)$ imply that $b^{n}\in I$ for some
		natural number $n$ and $a,b\in R,$ then $I$ is called a $\phi-pr$-ideal.
		
		\item If for every $a\in I-\phi(I)$ there is $b\in I$ such that $a=ab,$ then
		$I$ is called a $\phi-$pure ideal.
		
		\item If for every $a\in I-\phi(I)$ there is $b\in I$ such that $a=a^{2}b,$
		then $I$ is called a $\phi-$von Neumann regular ideal.
	\end{enumerate}
\end{definition}

\bigskip

Moreover, we define the concepts in Definition \ref{def 2} for $\phi
-pr$-ideal, $\phi-$pure ideal and $\phi-$von Neuman regular ideal.

\begin{remark}
	\label{remark}The elementary properties satisfy:
	
	\begin{enumerate}
		\item An ideal is weakly $r$-ideal if and only if it is an $r$-ideal.
		
		\item Every $\phi-r$-ideal is a $\phi-pr$-ideal.
		
		\item Every $\phi-$pure ideal is a $\phi-r$-ideal.
		
		\item Every $\phi-$von Neumann regular ideal is a $\phi-r$-ideal.
	\end{enumerate}
\end{remark}

\bigskip

Throughout this paper $\phi:L(R)\rightarrow L(R)\cup\{\emptyset\}$ is a
function. Since $I-\phi(I)=I-(I\cap\phi(I))$, for any ideal $I$ of $R$,
without loss of generality, assume that $\phi(I)\subseteq I.$ Moreover, let
$\psi_{1},$ $\psi_{2}:L(R)\rightarrow L(R)\cup\{\emptyset\}$ be two functions,
if $\psi_{1}(I)\subseteq\psi_{2}(I)$ for each $I\in L(R),$ we denote $\psi
_{1}\leq\psi_{2}.$ Thus clearly, we have the following order: $\phi
_{\emptyset}\leq\phi_{0}\leq\phi_{\omega}\leq\cdots\leq\phi_{n+1}\leq\phi
_{n}\leq\cdots\leq\phi_{2}\leq\phi_{1}$.

\begin{proposition}
	\label{pro basic}Let $R$\ be a ring and $I$ be a proper ideal $R.$ Let
	$\psi_{1},\psi_{2}:L(R)\rightarrow L(R)\cup\{\emptyset\}$ be two functions
	with $\psi_{1}\leq\psi_{2}.$
	
	\begin{enumerate}
		\item If $I$ is a $\psi_{1}$-$r$-ideal of $R,$ then $I$ is a $\psi_{2}$%
		-$r$-ideal of $R.$
		
		\item $I$ is a $r$-ideal $\Leftrightarrow$ $I$ is a weakly $r$-ideal
		$\Rightarrow$ $I$ is an $\omega$-$r$-ideal $\Rightarrow$ $I$ is an
		$(n+1)$-almost $r$-ideal$\Rightarrow$ $I$ is an $n$-$r$-ideal $(n\geq
		2)\Rightarrow$ $I$ is an almost $r$-ideal.
		
		\item $I$ is an $\omega$-$r$-ideal if and only if $I$ is an $n$-almost
		$r$-ideal for each $n\geq2.$
		
		\item $I$ is an idempotent ideal of $R$ $\Rightarrow$ $I$ is an $\phi_{n}-r
		$-ideal of $R$ for every $n\geq1.$
	\end{enumerate}
\end{proposition}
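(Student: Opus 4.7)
The plan is to exploit the monotonicity of the set difference $I - \phi(I)$ in $\phi$, which makes part (1) essentially a one-line observation that drives most of the remaining items. The only parts that require genuine (albeit short) arguments are the zero-product case in the $r$ vs.\ weakly $r$ equivalence of (2), the reverse implication in (3), and the vacuous-condition reasoning in (4).

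For part (1), I would take $a,b \in R$ with $ab \in I - \psi_2(I)$ and $\operatorname{Ann}(a)=(0)$; since $\psi_1 \leq \psi_2$ gives $\psi_1(I) \subseteq \psi_2(I)$, we get $ab \in I - \psi_1(I)$, and the $\psi_1$-$r$-ideal hypothesis delivers $b \in I$. Part (2) is then mostly a direct application of (1) to the chain $\phi_\emptyset \leq \phi_0 \leq \phi_\omega \leq \cdots \leq \phi_{n+1} \leq \phi_n \leq \cdots \leq \phi_2 \leq \phi_1$ already recorded before the proposition. The nontrivial ingredient is the equivalence of $r$-ideal and weakly $r$-ideal: the direction $r \Rightarrow$ weakly $r$ comes from (1), and for the converse I would separate cases on whether $ab = 0$ or $ab \neq 0$; in the first case $Ann(a)=(0)$ forces $b = 0 \in I$, while in the second $ab \in I - \{0_R\}$ applies the weakly $r$-ideal hypothesis directly.

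For part (3), the forward direction is just repeated application of the implication $(n+1)$-almost $r$-ideal $\Rightarrow$ $n$-almost $r$-ideal from (2). For the converse, given $ab \in I - \bigcap_{n=1}^{\infty} I^{n}$ with $\operatorname{Ann}(a)=(0)$, I would pick the smallest $n$ with $ab \notin I^n$; since $ab \in I$ this $n$ is at least $2$, so $ab \in I - I^n$ and the $n$-almost $r$-ideal hypothesis gives $b \in I$.

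Part (4) reduces to a triviality once one notices that $I = I^2$ implies $I = I^n$ for every $n \geq 1$ by an obvious induction, hence $I - \phi_n(I) = I - I^n = \emptyset$ and the defining implication of a $\phi_n$-$r$-ideal holds vacuously. There is no real obstacle anywhere in the proposition; the only point requiring any care is the zero-product analysis inside (2), and if anything one should emphasize that this is where the assumption $\operatorname{Ann}(a) = (0)$ is actually used (rather than being absorbed into the ordering argument).
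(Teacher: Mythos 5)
Your proposal is correct and takes essentially the same approach as the paper: part (1) by monotonicity of $I-\psi(I)$ in $\psi$, part (2) from (1) plus the zero-product case for the weakly $r$-ideal converse, part (3) by producing some $n\geq 2$ with $ab\notin I^{n}$, and part (4) vacuously since $I=I^{n}$. The only difference is that you explicitly supply the weakly $r$-ideal $\Rightarrow$ $r$-ideal argument (using $Ann(a)=(0)$ when $ab=0$), which the paper asserts without proof in its preceding remark; nothing is missing.
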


\begin{proof}
	(1): It is evident.
	
	(2): Follows from (1).
	
	(3): Every $\omega$-$r$-ideal is an $n$-almost $r$-ideal ideal for each
	$n\geq2\ $since $\phi_{\omega}\leq\phi_{n}.\ $Now, let $I\ $be an $n$-almost
	$r$-ideal for each $n\geq2.\ $Choose two elements $a,b\in R$ such that$\ ab\in
	I-$ $\cap_{n=1}^{\infty}I^{n}$ and $a$ is regular.$\ $Then we have $ab\in
	I-I^{n}\ $for some $n\geq2.\ $Since $I\ $is an $n$-almost $r$-ideal of
	$R\ $and $a$ is regular, we conclude $b\in I.\ $Therefore, $I\ $is an $\omega
	$-$r$-ideal.
	
	(4): Since $I^{n}=I$ for $n\geq2,$ it is clear.
\end{proof}

\begin{theorem}
	Let $I$ be a proper ideal $R.$
	
	\begin{enumerate}
		\item Let $\phi(I)\subseteq r(R).$ If $I$ is a $\phi-r$-ideal of $R,$ then
		$I/\phi(I)$ is an $r$-ideal of $R/\phi(I).$
		
		\item Let $\phi(I)$ be an $r$-ideal of $R$. If $I/\phi(I)$ is an $r$-ideal of
		$R/\phi(I),$ then $I$ is a $\phi-r$-ideal of $R$
	\end{enumerate}
\end{theorem}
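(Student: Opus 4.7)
The plan is a straightforward case analysis via the ideal-correspondence between $R$ and $R/\phi(I)$. For part (1), I would take $\bar a, \bar b \in R/\phi(I)$ with $\bar a\bar b \in I/\phi(I)$ and $\mathrm{Ann}(\bar a)=(\bar 0)$ in $R/\phi(I)$, so that $ab\in I$. If $ab\in \phi(I)$ then $\bar a\bar b=\bar 0$, and regularity of $\bar a$ yields $\bar b=\bar 0$, hence $b\in \phi(I)\subseteq I$. If instead $ab\in I-\phi(I)$, I would first lift regularity from $R/\phi(I)$ to $R$: assuming $ac=0$ with $c\neq 0$, one gets $\bar a\bar c=\bar 0$, so $\bar c=\bar 0$ and $c\in \phi(I)\setminus\{0\}$; the hypothesis $\phi(I)\subseteq r(R)$ then says $c$ is regular, which forces $a=0$ and contradicts $\bar a\neq \bar 0$. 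Hence $\mathrm{Ann}(a)=(0)$, and the $\phi$-$r$-ideal property of $I$ delivers $b\in I$, as required.

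For part (2), I would start with $ab\in I-\phi(I)$ and $\mathrm{Ann}(a)=(0)$, and examine the image $\overline{ab}$, which is a nonzero element of $I/\phi(I)$. The task is to transport regularity in the opposite direction: if $\bar a\bar c=\bar 0$ then $ac\in \phi(I)$, and since $\phi(I)$ is assumed to be an $r$-ideal of $R$ and $a$ is regular in $R$, this forces $c\in \phi(I)$, i.e.\ $\bar c=\bar 0$. Therefore $\bar a$ is regular in $R/\phi(I)$, and the $r$-ideal property of $I/\phi(I)$ gives $\bar b\in I/\phi(I)$, which lifts to $b\in I$.

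The one delicate point is the reading of the side hypothesis in~(1): taken literally, $\phi(I)\subseteq r(R)$ is vacuous because $0\in \phi(I)$ but $0\notin r(R)$, so it must be interpreted as $\phi(I)\setminus\{0\}\subseteq r(R)$. Under this interpretation, both directions reduce to short applications of the correspondence theorem together with the definitions of $r$-ideal and $\phi$-$r$-ideal; the two hypotheses ``$\phi(I)\subseteq r(R)$'' in (1) and ``$\phi(I)$ is an $r$-ideal'' in (2) are exactly the ingredients needed to transfer regularity between $R$ and $R/\phi(I)$ in the required direction, and without some such condition the trivial case $ab\in \phi(I)$ (resp.\ the lifting of $\bar a$-regularity) could not be handled.
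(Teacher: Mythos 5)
Your proof is correct and follows essentially the same route as the paper: in (1) you lift regularity of $\bar a$ in $R/\phi(I)$ to regularity of $a$ in $R$ using the hypothesis on $\phi(I)$, and in (2) you transfer regularity of $a$ down to $R/\phi(I)$ using that $\phi(I)$ is an $r$-ideal, exactly as the paper does. Your explicit handling of the trivial case $ab\in\phi(I)$ and your reading of $\phi(I)\subseteq r(R)$ as a condition on the nonzero elements of $\phi(I)$ are just careful patches of small slips in the paper's own write-up, not a genuinely different argument.
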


\begin{proof}
	$(1):$ Suppose that $I$ is a $\phi-r$-ideal of $R.$ Let $a+\phi(I),b+\phi
	(I)\in R/\phi(I)$ such that $(a+\phi(I))(b+\phi(I))\in I/\phi(I)$ and
	$a+\phi(I)$ be a regular element of $R/\phi(I)$. Therefore $ab\in I-$
	$\phi(I).$ Since $a+\phi(I)$ is regular element, $a$ is a regular element of
	$R$. Indeed, if $a$ is non-regular, there exists $0\neq x\in R$ such that
	$xa=0. $ Then we have $(a+\phi(I))(x+\phi(I))=0+\phi(I).$ Since $a+\phi(I)$ is
	regular, $x+\phi(I)=0+\phi(I),$ i.e., $x\in\phi(I)\subseteq r(R).$ This gives
	us $Ann(x)=(0),$ a contradiction with $0\neq a\in Ann(x)$. Thus as $I$ is
	since $\phi-r$-ideal of $R,$ we obtain $b\in I.$ This implies that
	$b+\phi(I)\in I/\phi(I).$ It is done.
	
	$(2):$ Let $I/\phi(I)$ be an $r$-ideal of $R/\phi(I).$ Choose $a,b\in R$ such
	that $ab\in I-$ $\phi(I)$ which $a$ is regular. Then $(a+\phi(I))(b+\phi
	(I))\in I/\phi(I).$ Also, since $a$ is regular, $a+\phi(I)$ is regular.
	Indeed, if $a+\phi(I)$ is non-regular, there exists $0\neq x+\phi(I)\in$
	$R/\phi(I)$ such that $(a+\phi(I))(x+\phi(I))=0+\phi(I).$ Then $ax\in\phi(I).$
	Since $\phi(I)$ is $r$-ideal and $a$ is regular, we get $x\in\phi(I),$ a
	contradiction. Therefore, since $a+\phi(I)$ is regular and $I/\phi(I)$ is an
	$r$-ideal, we have $b+\phi(I)\in I/\phi(I),$ so $b\in I.$
\end{proof}

\begin{theorem}
	\label{the cha}Let $I$ be a proper ideal $R.$ Then the followings are equivalent:
	
	\begin{enumerate}
		\item $I$ is a $\phi-r$-ideal of $R.$
		
		\item For every regular element $x$ of $R$, $(I:_{R}x)=I\cup$ $(\phi
		(I):_{R}x).$
		
		\item For every regular element $x$ of $R$, $(I:_{R}x)=I$ or $(I:_{R}%
		x)=(\phi(I):_{R}x).$
	\end{enumerate}
\end{theorem}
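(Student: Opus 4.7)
The plan is to establish the cycle (1) $\Rightarrow$ (2) $\Rightarrow$ (3) $\Rightarrow$ (1), which is the standard template for such characterizations in the $\phi$-ideal literature.

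For (1) $\Rightarrow$ (2), I would fix a regular element $x$ and show both inclusions. The inclusion $I \cup (\phi(I):_R x) \subseteq (I:_R x)$ is routine: any $b \in I$ gives $xb \in I$, and $\phi(I) \subseteq I$ (by the global convention established just before the statement) makes the second piece trivial. For the reverse, take $b \in (I:_R x)$ so that $xb \in I$. I would split into two cases: if $xb \in \phi(I)$, then $b \in (\phi(I):_R x)$; otherwise $xb \in I - \phi(I)$ with $\operatorname{Ann}(x)=(0)$, so the $\phi$-$r$-ideal hypothesis forces $b \in I$.

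For (2) $\Rightarrow$ (3), I would use the classical fact that an ideal cannot be written as the union of two proper subideals: since $(I:_R x)$, $I$, and $(\phi(I):_R x)$ are all ideals of $R$, the equality $(I:_R x) = I \cup (\phi(I):_R x)$ forces $(I:_R x) \subseteq I$ or $(I:_R x) \subseteq (\phi(I):_R x)$, and combined with the reverse containments (which again hold automatically) this gives the desired equality with one of the two pieces.

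For (3) $\Rightarrow$ (1), I would take $a,b \in R$ with $ab \in I - \phi(I)$ and $\operatorname{Ann}(a) = (0)$. Then $b \in (I:_R a)$, and by hypothesis either $(I:_R a) = I$, in which case $b \in I$ immediately, or $(I:_R a) = (\phi(I):_R a)$, in which case $ab \in \phi(I)$, contradicting $ab \notin \phi(I)$. I do not anticipate any serious obstacle here; the only subtlety is remembering to invoke the union-of-ideals lemma at step (2) $\Rightarrow$ (3), and to use the global convention $\phi(I) \subseteq I$ so that $(\phi(I):_R x) \subseteq (I:_R x)$ is automatic.
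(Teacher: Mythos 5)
Your proposal is correct and follows essentially the same route as the paper: the cycle $(1)\Rightarrow(2)\Rightarrow(3)\Rightarrow(1)$, with the same case split on $xb\in\phi(I)$ in the first implication and the same contradiction in the last. The only difference is that the paper dismisses $(2)\Rightarrow(3)$ as obvious, whereas you explicitly cite the fact that an ideal cannot be the union of two proper subideals, which is exactly the justification the paper leaves implicit.
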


\begin{proof}
	$(1)\Rightarrow(2):$ It is clear that $I\subseteq(I:_{R}x)$ and $(\phi
	(I):_{R}x)\subseteq(I:_{R}x).$ So, the first containment is obtained. For the
	other, choose $y\in(I:_{R}x).$ Then $xy\in I.$ If $xy\in\phi(I),$ $y\in
	(\phi(I):_{R}x).$ If $xy\notin\phi(I),$ as $I$ is $\phi-r$-ideal and $x$ is
	regular, $y\in I.$ Consequently, $y\in I\cup$ $(\phi(I):_{R}x).$
	
	$(2)\Rightarrow(3):$ It is obvious.
	
	$(3)\Rightarrow(1):$ Choose $y,z\in R$ such that $yz\in I-$ $\phi(I)$ which
	$y$ is a regular element. By (3), we conclude that $(I:_{R}y)=I$ or
	$(I:_{R}y)=(\phi(I):_{R}y).$ Let $(I:_{R}y)=I$. Then as $yz\in I,$ we get
	$z\in$ $(I:_{R}y)=I,$ as needed. Let $(I:_{R}y)=(\phi(I):_{R}y).$ Since $z\in$
	$(I:_{R}y),$ we say $yz\in\phi(I).$ This gives us a contradiction.
\end{proof}

\begin{proposition}
	Let $I$ be a proper ideal $R.$ If $I$ is a $\phi-r$-ideal of $R,$ then $I-$
	$\phi(I)\subseteq zd(R).$
\end{proposition}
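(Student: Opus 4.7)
The plan is to argue by contradiction, using the proper-ideal hypothesis together with the defining implication of a $\phi$-$r$-ideal in the most direct possible way.

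Suppose, aiming for a contradiction, that there exists $a \in I - \phi(I)$ with $a \notin zd(R)$, i.e., $Ann(a) = (0)$. I would then stage the element $a$ as a product by writing $a = a \cdot 1$. This is the key trick: the factorization $a \cdot 1$ sits in $I - \phi(I)$ (since $a$ itself does), and the left factor $a$ is by assumption regular.

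Applying the defining property of a $\phi$-$r$-ideal from Definition \ref{def 1} to the product $a \cdot 1 \in I - \phi(I)$ with $Ann(a) = (0)$, one concludes that $1 \in I$. But this forces $I = R$, contradicting the hypothesis that $I$ is a proper ideal. Hence no such $a$ exists, and every $a \in I - \phi(I)$ must satisfy $Ann(a) \neq (0)$, i.e., $a \in zd(R)$.

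There is essentially no obstacle here: the argument is a single application of the definition to the canonical factorization $a = a \cdot 1$, and the only subtlety worth flagging for the reader is the implicit assumption (already adopted after Remark \ref{remark}) that $\phi(I) \subseteq I$, which guarantees $I - \phi(I) \subseteq I$ so that the element $a$ we chose really does lie in $I$ and the conclusion $1 \in I$ is a genuine contradiction.
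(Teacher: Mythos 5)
Your proof is correct and matches the paper's argument exactly: both factor the regular element $a\in I-\phi(I)$ as $a\cdot 1_{R}$ and apply the defining property of a $\phi$-$r$-ideal to conclude $1_{R}\in I$, contradicting properness. The remark about the convention $\phi(I)\subseteq I$ is a fine (if optional) clarification.
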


\begin{proof}
	Suppose that $I-$ $\phi(I)\nsubseteq zd(R).$ Then there exists a regular
	element which $x\in I-$ $\phi(I).$ Consider $x$ as $x\cdot1_{R}.$ Then as $I$
	is $\phi-r$-ideal, we get $1_{R}\in I.$ This contradicts with the choice of
	$I$. As conclusion, $I-$ $\phi(I)\subseteq zd(R).$
\end{proof}

\begin{theorem}
	\label{the equ}Let $\phi(I)$ be an $r$-ideal of $R.$ Then $I$ is a $\phi
	-r$-ideal of $R$ $\Leftrightarrow$ $I$ is $r$-ideal of $R.$
\end{theorem}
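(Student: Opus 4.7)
The plan is to prove both directions by elementary case analysis, with the hypothesis $\phi(I) \subseteq I$ (adopted globally in the paper) making one direction immediate.

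For the ($\Leftarrow$) direction, I would argue that every $r$-ideal is automatically a $\phi$-$r$-ideal for any $\phi$. Concretely, suppose $I$ is an $r$-ideal and take $a,b \in R$ with $ab \in I - \phi(I)$ and $\operatorname{Ann}(a) = (0)$. Since $I - \phi(I) \subseteq I$, we get $ab \in I$, and as $a$ is regular and $I$ is an $r$-ideal, $b \in I$. This is really just the observation $\phi_{\emptyset} \leq \phi$ combined with part (1) of Proposition \ref{pro basic}, so I would invoke that instead of rewriting it.

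For the ($\Rightarrow$) direction, assume $I$ is a $\phi$-$r$-ideal and $\phi(I)$ is an $r$-ideal. Take $a,b \in R$ with $ab \in I$ and $\operatorname{Ann}(a) = (0)$; I need $b \in I$. Split into two cases. If $ab \notin \phi(I)$, then $ab \in I - \phi(I)$, so the $\phi$-$r$-ideal property of $I$ together with regularity of $a$ gives $b \in I$. If instead $ab \in \phi(I)$, then since $\phi(I)$ is an $r$-ideal and $a$ is regular, $b \in \phi(I) \subseteq I$ (using the standing convention $\phi(I) \subseteq I$). Either way $b \in I$.

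There is no real obstacle here; the only subtlety is making sure to use the containment $\phi(I) \subseteq I$ (noted just before Proposition \ref{pro basic}) to conclude the second case, and to recognize that the forward direction genuinely needs the extra hypothesis that $\phi(I)$ itself is an $r$-ideal, which is precisely what lets the case $ab \in \phi(I)$ be absorbed. I would keep the write-up to a few lines.
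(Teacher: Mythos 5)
Your proof is correct and follows essentially the same route as the paper: the forward direction is the same two-case analysis ($ab\notin\phi(I)$ versus $ab\in\phi(I)$, using the standing convention $\phi(I)\subseteq I$), and the reverse direction, which the paper dismisses as obvious, is just the observation that any $r$-ideal is a $\phi$-$r$-ideal.
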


\begin{proof}
	$\Rightarrow:$ Assume that $a,b\in R$ such that $ab\in I$ and$\ a$ is a
	regular element. If $ab\notin\phi(I),$ since $I$ is $\phi-r$-ideal, we have
	$b\in I,$ as desired. If $ab\in\phi(I),$ as $\phi(I)$ is $r$-ideal, we
	conclude that $b\in\phi(I)\subseteq I.$ Thus it is done.
	
	$\Leftarrow:$ Obvious.
\end{proof}

\begin{theorem}
	\label{the rad}Let $I$ be a proper ideal of $R$ with $\sqrt{\phi(I)}%
	=\phi(\sqrt{I}).$ If $I$ is a $\phi-r$-ideal of $R$, then $\sqrt{I}$ is a
	$\phi-r $-ideal of $R.$
\end{theorem}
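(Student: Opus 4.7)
The plan is to unfold the definitions and then lift the $\phi$-$r$-ideal property from $I$ up to $\sqrt{I}$ by passing to powers. So suppose $a,b \in R$ satisfy $ab \in \sqrt{I} - \phi(\sqrt{I})$ with $\mathrm{Ann}(a) = (0)$; the goal is to show $b \in \sqrt{I}$.

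First, I would use the hypothesis $\sqrt{\phi(I)} = \phi(\sqrt{I})$ to translate the condition $ab \notin \phi(\sqrt{I})$ into the statement that $ab \notin \sqrt{\phi(I)}$, which in turn means $(ab)^k \notin \phi(I)$ for every $k \geq 1$. Meanwhile, from $ab \in \sqrt{I}$ there is some $n \geq 1$ with $(ab)^n = a^n b^n \in I$. Combining the two, we obtain
\[
a^n b^n \in I - \phi(I).
\]

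Next I would verify that $a^n$ is regular. This is a short induction: if $x a^n = 0$ then $(x a^{n-1}) a = 0$, so $x a^{n-1} = 0$ by regularity of $a$, and iterating yields $x = 0$. Hence $\mathrm{Ann}(a^n) = (0)$.

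With these two facts in hand, I would apply the assumption that $I$ itself is a $\phi$-$r$-ideal to the pair $(a^n, b^n)$: since $a^n \cdot b^n \in I - \phi(I)$ and $a^n$ is regular, the definition forces $b^n \in I$, which by the definition of radical gives $b \in \sqrt{I}$, finishing the proof. The only delicate point is the bookkeeping with the hypothesis $\sqrt{\phi(I)} = \phi(\sqrt{I})$ — one must be careful to use it in the direction $\phi(\sqrt{I}) \subseteq \sqrt{\phi(I)}$ so that $ab \notin \phi(\sqrt{I})$ yields $(ab)^n \notin \phi(I)$; the rest is essentially mechanical.
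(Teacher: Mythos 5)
Your argument is correct and takes essentially the same route as the paper's proof: from $ab\in\sqrt{I}$ pick $k$ with $(ab)^{k}=a^{k}b^{k}\in I$, use the hypothesis $\phi(\sqrt{I})=\sqrt{\phi(I)}$ to see that this power avoids $\phi(I)$, observe that $a^{k}$ is regular, and invoke the $\phi$-$r$-ideal property of $I$ to get $b^{k}\in I$, i.e.\ $b\in\sqrt{I}$. One small correction to your closing remark: the half of the hypothesis actually needed is $\sqrt{\phi(I)}\subseteq\phi(\sqrt{I})$ (if $(ab)^{k}\in\phi(I)$ for some $k$, then $ab\in\sqrt{\phi(I)}\subseteq\phi(\sqrt{I})$, contradicting $ab\notin\phi(\sqrt{I})$), not the inclusion $\phi(\sqrt{I})\subseteq\sqrt{\phi(I)}$ you named --- harmless here, since the theorem assumes equality.
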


\begin{proof}
	Choose $a,b\in R$ such that $ab\in\sqrt{I}-$ $\phi(\sqrt{I})$ and $a$ is
	regular. As $ab\in\sqrt{I},$ there exists a natural number $k$ which
	$(ab)^{k}=a^{k}b^{k}\in I.$ On the other hand, $ab\notin$ $\phi(\sqrt
	{I})=\sqrt{\phi(I)},$ one can say $a^{k}b^{k}\notin$ $\phi(I).$ Also, since
	$a$ is regular, it is obvious that $a^{k}$ is regular. As $I$ is $\phi
	-r$-ideal, we get $b^{k}\in I,$ i.e., $b\in\sqrt{I}.$
\end{proof}

\begin{theorem}
	\label{the union}Let $\{I_{k}\}_{k\in\Delta}$ be a collection of ascending
	chain of $\phi-r$-ideals of $R.$ If $\phi$ preserves the order, then
	$\underset{k\in\Delta}{\bigcup}I_{k}$ is a $\phi-r$-ideal of $R.$
\end{theorem}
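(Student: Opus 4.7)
The plan is to unfold the definition of $\phi$-$r$-ideal directly on the union, using the chain hypothesis to locate the product in a single member and the order-preserving hypothesis to transfer the condition on $\phi$.

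First I would set $I := \bigcup_{k\in\Delta} I_k$ and observe that $I$ is a proper ideal: because $\{I_k\}$ is an ascending chain, the union is an ideal, and since each $I_k$ is proper the identity $1_R$ lies in no $I_k$, hence not in $I$.

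Next I would verify the $\phi$-$r$-ideal condition. Pick $a,b\in R$ with $ab\in I-\phi(I)$ and $Ann(a)=(0)$. By definition of the union there is some index $k_0\in\Delta$ with $ab\in I_{k_0}$. The key point is that $ab\notin\phi(I_{k_0})$: indeed, since $I_{k_0}\subseteq I$ and $\phi$ preserves the order, we have $\phi(I_{k_0})\subseteq\phi(I)$, and from $ab\notin\phi(I)$ it follows that $ab\notin\phi(I_{k_0})$. Thus $ab\in I_{k_0}-\phi(I_{k_0})$. Applying the hypothesis that $I_{k_0}$ is a $\phi$-$r$-ideal to $a$ (regular) and $b$ yields $b\in I_{k_0}\subseteq I$, completing the verification.

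The argument is essentially a bookkeeping exercise; the only genuine point is the use of the order-preserving property of $\phi$, which is exactly what allows the non-membership $ab\notin\phi(I)$ to pass down to $ab\notin\phi(I_{k_0})$. The ascending chain hypothesis is used only to guarantee that the union is an ideal and that $ab$, as a single element, can be trapped in one $I_{k_0}$. No other obstacle is expected.
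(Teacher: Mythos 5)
Your proof is correct and follows essentially the same route as the paper's: trap $ab$ in a single $I_{k_0}$, use order-preservation of $\phi$ to push $ab\notin\phi(I)$ down to $ab\notin\phi(I_{k_0})$, and apply the $\phi$-$r$-ideal hypothesis there. Your additional checks that the union is a proper ideal are implicit in the paper but make the argument more complete.
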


\begin{proof}
	Choose $x,y\in R$ such that $xy\in\underset{k\in\Delta}{\bigcup}I_{k}-$
	$\phi(\underset{k\in\Delta}{\bigcup}I_{k})$ which $x$ is a regular element.
	Then for some $i\in\Delta,$ $xy\in I_{i}.$ It is clear that $xy\notin
	\phi(I_{i}),$ since $\phi$ preserves the order. Thus, as $I_{i}$ is a $\phi
	-r$-ideal of $R,$ we see $y\in I_{i}\subseteq\underset{k\in\Delta}{\bigcup
	}I_{k}.$
\end{proof}

\begin{proposition}
	\label{pro zd}Let $I$ be a proper ideal of $R$ and $\phi(I)$ be an $r$-ideal
	of $R.$ If $I$ is a $\phi-r$-ideal of $R,$ then $I\subseteq zd(R).$
\end{proposition}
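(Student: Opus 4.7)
The plan is to reduce the statement to the ordinary $r$-ideal case using Theorem \ref{the equ}, and then exploit the properness of $I$ to rule out regular elements. This bypasses having to redo the case split between elements inside and outside $\phi(I)$ by hand.

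First, I would invoke Theorem \ref{the equ}: since $\phi(I)$ is assumed to be an $r$-ideal of $R$ and $I$ is a $\phi$-$r$-ideal of $R$, that theorem immediately upgrades $I$ to an honest $r$-ideal of $R$. This step is the only place the hypothesis on $\phi(I)$ is used.

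Next, I would argue by contradiction. Suppose $I \not\subseteq zd(R)$, so that there exists a regular element $a \in I$. Writing $a = a \cdot 1_R \in I$ with $a$ regular, the $r$-ideal property applied to the pair $(a,1_R)$ forces $1_R \in I$. This contradicts the standing assumption that $I$ is proper, so no regular element of $R$ can lie in $I$, i.e. $I \subseteq zd(R)$.

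There is essentially no obstacle here; the only subtlety is to notice that Theorem \ref{the equ} does the real work, after which the conclusion is just the well-known fact that a proper $r$-ideal consists entirely of zerodivisors. One could alternatively give a direct argument by combining the earlier proposition (which yields $I - \phi(I) \subseteq zd(R)$) with the observation that $\phi(I)$, being itself a proper $r$-ideal, satisfies $\phi(I) \subseteq zd(R)$ by the same $a = a \cdot 1_R$ trick; taking the union recovers $I \subseteq zd(R)$. I would prefer the first route since it is shorter and isolates exactly where each hypothesis is used.
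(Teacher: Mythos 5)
Your proof is correct, and both routes you sketch work. The difference from the paper is one of organization rather than substance: the paper proves the proposition directly by the case split on whether the regular element $x\in I$ lies in $\phi(I)$ or not (using the $\phi$-$r$-ideal property in the first case and the $r$-ideal property of $\phi(I)$ in the second), which is exactly the dichotomy that the proof of Theorem \ref{the equ} already carries out. By citing Theorem \ref{the equ} you outsource that case analysis and reduce the statement to the standard fact that a proper $r$-ideal contains no regular element (the $a=a\cdot 1_R$ trick), which is cleaner and makes transparent where the hypothesis on $\phi(I)$ enters; the paper's inline argument is self-contained but duplicates work already done. Your alternative route, combining the earlier proposition $I-\phi(I)\subseteq zd(R)$ with $\phi(I)\subseteq zd(R)$, is also valid, with the minor caveat that one should note $\phi(I)$ is proper (it is, since $\phi(I)\subseteq I\subsetneq R$) before applying the $r$-ideal-implies-zerodivisors fact to it.
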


\begin{proof}
	Suppose that $I\nsubseteq zd(R).$ There is an element $x\in I$ but $x\notin
	zd(R).$ So $x$ is a regular element. Consider $x\cdot1_{R}\in I.$ If
	$x\notin\phi(I),$then $1_{R}\in I,$ a contradiction. If $x\in$ $\phi(I),$ we
	obtain $1_{R}\in\phi(I)\subseteq I,$ since $\phi(I)$ is an $r$-ideal of $R.$
\end{proof}

\begin{proposition}
	Let $\phi(I)$ be an $r$-ideal of $R$ and $I$ be a prime ideal of $R.$ Then $I$
	is $\phi-r$-ideal $\Leftrightarrow$ $I\subseteq zd(R).$
\end{proposition}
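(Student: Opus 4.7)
The plan is to reduce the biconditional to the characterization already available via Theorem \ref{the equ} and then exploit primality. Since $\phi(I)$ is assumed to be an $r$-ideal of $R$, Theorem \ref{the equ} tells us that $I$ is a $\phi$-$r$-ideal of $R$ if and only if $I$ is an $r$-ideal of $R$. So the task collapses to showing that, for a prime ideal $I$, being an $r$-ideal is equivalent to $I \subseteq zd(R)$.

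For the forward direction, I would simply invoke Proposition \ref{pro zd}: the hypothesis that $\phi(I)$ is an $r$-ideal is already in force, and that proposition then gives $I \subseteq zd(R)$ as soon as $I$ is a $\phi$-$r$-ideal. No further work is needed here.

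For the converse, assume $I \subseteq zd(R)$, and let $a,b \in R$ with $ab \in I$ and $a$ regular. Since $I$ is prime, either $a \in I$ or $b \in I$. The case $a \in I$ would force $a \in zd(R)$ by the inclusion $I \subseteq zd(R)$, contradicting the regularity of $a$. Hence $b \in I$, which shows $I$ is an $r$-ideal, and therefore a $\phi$-$r$-ideal by Theorem \ref{the equ}.

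There is no real obstacle: the only mildly nontrivial point is noticing that primality and $I \subseteq zd(R)$ together rule out $a \in I$ when $a$ is regular, which is why the regularity hypothesis transfers from $a$ to $b$ without needing the $\phi(I)$ exclusion. The whole argument is a two-line deduction once Theorem \ref{the equ} and Proposition \ref{pro zd} are in place.
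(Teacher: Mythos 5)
Your proof is correct and follows essentially the same route as the paper: the forward direction is exactly the appeal to Proposition \ref{pro zd}, and the converse rests on the same key observation that primality together with $I\subseteq zd(R)$ rules out $a\in I$ for a regular $a$. The only cosmetic difference is that you package the converse through Theorem \ref{the equ} by proving $I$ is a full $r$-ideal, whereas the paper verifies the $\phi$-$r$-ideal condition directly on $ab\in I-\phi(I)$; both are fine.
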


\begin{proof}
	Suppose $I$ is a prime ideal and $\phi(I)$ is an $r$-ideal of $R$.
	
	$\Rightarrow:$ It is clear by Proposition \ref{pro zd}.
	
	$\Leftarrow:$ Choose $a,b\in R$ such that $ab\in I-\phi(I)$ and $a$ is a
	regular elemet$.$ Since $I$ is prime, either $a\in I$ or $b\in I.$ The first
	option contradicts with $a\in r(R),$ since $I\subseteq zd(R).$ Consequently,
	$b\in I,$ as desired.
\end{proof}

\begin{proposition}
	\label{pro X}Let $I$ be a proper ideal of $R$ and $x\in R-I.$ Let
	$(\phi(I):_{R}x)\subseteq\phi((I:_{R}x)).$ If $I$ is a $\phi-r$-ideal of $R$,
	then\ $(I:_{R}x)$ is $\phi-r$-ideal of $R$.
\end{proposition}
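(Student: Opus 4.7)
The plan is to unwind the definitions and push the hypothesis on $I$ through the colon ideal. So I would start by picking arbitrary $a,b\in R$ satisfying $ab\in (I:_R x)-\phi((I:_R x))$ with $\mathrm{Ann}(a)=(0)$, and aim to conclude $b\in (I:_R x)$, i.e.\ $bx\in I$.

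From $ab\in (I:_R x)$ I would immediately read off the key consequence $a(bx)=(ab)x\in I$. The regularity of $a$ is preserved verbatim, so the only remaining ingredient needed to invoke the hypothesis that $I$ is a $\phi$-$r$-ideal is to know $a(bx)\notin \phi(I)$. This is exactly where the extra assumption $(\phi(I):_R x)\subseteq \phi((I:_R x))$ enters: taking contrapositives, $ab\notin \phi((I:_R x))$ forces $ab\notin (\phi(I):_R x)$, which is precisely the statement $(ab)x\notin \phi(I)$.

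Combining the two, $a(bx)\in I-\phi(I)$ with $a$ regular, the $\phi$-$r$-ideal hypothesis on $I$ yields $bx\in I$, i.e.\ $b\in (I:_R x)$, which closes the argument. The remark that $(I:_R x)$ is proper follows from $x\notin I$, since $1\in (I:_R x)$ would force $x\in I$.

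I do not anticipate a genuine obstacle here; the whole proof is essentially a bookkeeping exercise with colons. The only subtle point is making sure that the inclusion $(\phi(I):_R x)\subseteq \phi((I:_R x))$ is used in the correct direction (to transport a non-membership statement from $\phi((I:_R x))$ down to $\phi(I)$ after multiplication by $x$), and this is exactly how the hypothesis is phrased.
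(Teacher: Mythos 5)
Your argument is correct and is essentially the paper's own proof: multiply by $x$ to get $(ab)x\in I$, use the inclusion $(\phi(I):_R x)\subseteq\phi((I:_R x))$ contrapositively to see $(ab)x\notin\phi(I)$, and apply the $\phi$-$r$-ideal hypothesis on $I$ to conclude $bx\in I$. Your added remark that $x\notin I$ makes $(I:_R x)$ proper is a small but welcome point the paper leaves implicit.
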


\begin{proof}
	Set $y,z\in R$ such that $yz\in(I:_{R}x)-\phi((I:_{R}x))$ and $y$ is regular.
	Then we get $yzx\in I$ and $yzx\notin\phi(I)$ by $(\phi(I):_{R}x)\subseteq
	\phi((I:_{R}x)).$ Since $y$ is regular and $I$ is a $\phi-r$-ideal, we get
	$zx\in I,$ i.e., $z\in(I:_{R}x),$ as required.
\end{proof}

\begin{definition}
	Let $I$ be a proper ideal of $R$. If for every ideal $X$ and $Y$ of $R$ such
	that $XY\subseteq I,$ $XY\nsubseteq\phi(I)$ and $Ann(X)=(0)$ implies
	$Y\subseteq I,$ then $I$ is called strongly $\phi-r$-ideal of $R.$
\end{definition}

\begin{proposition}
	Every strongly $\phi-r$-ideal is a $\phi-r$-ideal.
\end{proposition}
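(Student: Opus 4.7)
The plan is to reduce the element-wise $\phi$-$r$-ideal condition to the strongly $\phi$-$r$-ideal condition by specializing to principal ideals. Suppose $I$ is a strongly $\phi$-$r$-ideal. I would start with arbitrary $a,b\in R$ satisfying $ab\in I-\phi(I)$ and $\mathrm{Ann}(a)=(0)$, and my goal is to produce $b\in I$.

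Next I would set $X=\langle a\rangle$ and $Y=\langle b\rangle$, and check the three hypotheses of the strongly $\phi$-$r$-ideal definition for this pair. First, $XY=\langle ab\rangle\subseteq I$ because $ab\in I$. Second, $XY\nsubseteq\phi(I)$, since the generator $ab$ of $XY$ lies outside $\phi(I)$. Third, $\mathrm{Ann}(X)=\mathrm{Ann}(\langle a\rangle)=\mathrm{Ann}(a)=(0)$; the only nontrivial bit here is the identification $\mathrm{Ann}(\langle a\rangle)=\mathrm{Ann}(a)$, which holds in any commutative ring with identity because $ra=0$ is equivalent to $r$ annihilating every multiple of $a$. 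Applying the strongly $\phi$-$r$-ideal hypothesis to $X$ and $Y$ then yields $Y=\langle b\rangle\subseteq I$, whence $b\in I$.

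There is essentially no obstacle: the argument is a direct specialization of the ideal-theoretic definition to principal ideals, and every step is a one-line verification. The only thing worth being explicit about is the anticipation that a reader might worry whether $\mathrm{Ann}(\langle a\rangle)$ could be strictly larger than $\mathrm{Ann}(a)$; writing out this equality removes any doubt. Thus the proposition follows immediately from the definition once the principal ideals are set up.
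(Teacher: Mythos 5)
Your argument is correct and coincides with the paper's own proof: both specialize the strongly $\phi$-$r$-ideal condition to the principal ideals $\langle a\rangle$ and $\langle b\rangle$, check that $\langle a\rangle\langle b\rangle\subseteq I$ but $\nsubseteq\phi(I)$, and verify $\mathrm{Ann}(\langle a\rangle)=(0)$ (the paper does this by the same observation, phrased as a short contradiction rather than the identity $\mathrm{Ann}(\langle a\rangle)=\mathrm{Ann}(a)$). No gaps.
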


\begin{proof}
	Suppose that $I$ is a strongly $\phi-r$-ideal. Choose $x,y\in R$ such that
	$xy\in I-\phi(I)$ and $x$ is regular. Then it is clear that $<x><y>\subseteq
	I$ and $<x><y>\nsubseteq\phi(I).$ Now let us observe that $Ann(<x>)=(0).$
	Suppose that for $0\neq u\in R,$ $u<x>=(0).$ This means that for all $r\in R,
	$ $urx=0.$ Since$\ x$ is regular, we obtain $ur=0$ for all $r\in R.$ This
	implies $u=0,$ a contradiction. Thus as $I$ is strongly $\phi-r$-ideal,
	$<y>\subseteq I$ , so $y\in I.$
\end{proof}

\begin{theorem}
	Let $R$ satisfy the s. a. c. and $I$ be a proper ideal. Let $\phi(I)$ be an
	$r$-ideal of $R.$ $I$ is a $\phi-r$-ideal if and only if for every f. g. ideal
	$X$ of $R$ and every ideal $Y$ of $R$ such that $XY\subseteq I,$
	$XY\nsubseteq\phi(I)$ and $Ann(X)=(0)$ implies $Y\subseteq I.$
\end{theorem}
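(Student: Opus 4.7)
The plan is to prove the two directions separately, exploiting principal ideals for the easy direction and the strong annihilator condition for the hard direction.

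For the $(\Leftarrow)$ direction, I would reduce the elementwise definition of a $\phi-r$-ideal to the ideal-theoretic hypothesis using principal ideals. Given $a, b \in R$ with $ab \in I - \phi(I)$ and $Ann(a) = (0)$, set $X = \langle a \rangle$ (which is finitely generated) and $Y = \langle b \rangle$. A quick check shows $Ann(\langle a \rangle) = Ann(a) = (0)$, and $XY = \langle ab \rangle \subseteq I$ with $XY \not\subseteq \phi(I)$ because $ab \notin \phi(I)$. The hypothesis then yields $Y \subseteq I$, hence $b \in I$.

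For the harder $(\Rightarrow)$ direction, the strategy is to locate a single regular element inside $X$ using s.a.c. Let $X$ be a finitely generated ideal with $Ann(X) = (0)$. By the strong annihilator condition, pick $b \in X$ such that $Ann(b) = Ann(X) = (0)$, so $b$ is regular. Now fix an arbitrary $y \in Y$; since $by \in XY \subseteq I$, I want to conclude $y \in I$.

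The main subtlety, and the step I expect to be the key obstacle, is that $by$ need not lie in $I - \phi(I)$, so the $\phi-r$-ideal property of $I$ does not apply directly to $by$. The workaround uses the extra hypothesis that $\phi(I)$ is an $r$-ideal: if $by \notin \phi(I)$, then $by \in I - \phi(I)$ with $b$ regular, and the $\phi-r$-ideal property of $I$ gives $y \in I$; if instead $by \in \phi(I)$, then regularity of $b$ together with $\phi(I)$ being an $r$-ideal gives $y \in \phi(I) \subseteq I$. Either way $y \in I$, so $Y \subseteq I$. Note that the hypothesis $XY \not\subseteq \phi(I)$ is not actually required to reach the conclusion in this direction, but it is part of the stated implication, so no harm is done.
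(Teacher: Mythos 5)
Your proof is correct and follows essentially the same route as the paper: for the forward direction you use s.\,a.\,c.\ to extract a single regular element $b\in X$ with $Ann(b)=Ann(X)=(0)$ and then split into the cases $by\notin\phi(I)$ (use that $I$ is a $\phi-r$-ideal) and $by\in\phi(I)$ (use that $\phi(I)$ is an $r$-ideal), exactly as the paper does (the paper phrases it by contradiction and, like you, never actually needs $XY\nsubseteq\phi(I)$ there), while your principal-ideal reduction for the converse is the argument the paper leaves as ``straightforward.''
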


\begin{proof}
	($\Rightarrow$):\ Assume that $I$ is a $\phi-r$-ideal and $\phi(I)$ is a
	$r$-ideal. Let $X$ \ be a f. g. ideal of $R$ and $Y$ be an ideal of $R$ such
	that $XY\subseteq I,$ $XY\nsubseteq\phi(I)$ and $Ann(X)=(0).$ Suppose
	$Y\nsubseteq I.$ Then there is an element $0\neq y\in Y$ and $y\notin I.$ On
	the other hand, since $R$ satisfies the s. a. c. and $\ X$ is f.g., there is
	$z\in X$ with $Ann(X)=Ann(z).$ This means that $z$ is a regular element, as
	$Ann(X)=(0).$ Consider $zy\in I.$ If $zy\notin\phi(I),$ since $I$ is a
	$\phi-r$-ideal, we obtain $y\in I,$ a contradiction. If $zy\in\phi(I),$ as
	$\phi(I)$ is a $r$-ideal, again we conclude the same contradiction. Thus it
	must be $Y\subseteq I.\ $
	
	($\Leftarrow$):\ It is straightforward.
\end{proof}

\bigskip In \cite{rostam} by the help of Proposition 2.22, R. Mohammadian
proved that if $J/I$ is an $r$-ideal of $R/I,$ then $J$ is also an $r$-ideal
of $R,$ where $I$ is an $r$-ideal of $R$ contained in ideal $J.$ Now, let us
examine the proposition for the concept of $\phi-r$-ideal.

\bigskip Let $I$ be an ideal of $R.$ Define $\phi_{I}:L(R/I)\rightarrow
L(R/I)\cup\{\emptyset\}$ by $\phi_{I}(J/I)=(\phi(J)+I)/I$ for every ideal
$I\subseteq J$ and $\phi_{I}(J/I)=\emptyset$ if $\phi(J)=\emptyset. $ Notice
that $\phi_{I}(J/I)\subseteq J/I.$

\begin{theorem}
	\bigskip Let $I\subseteq r(R)$ be an ideal of $R$ contained in ideal $J.$ If
	$J$ is a $\phi-r$-ideal of $R$, then $J/I$ is a $\phi_{I}-r$-ideal of $R/I. $
\end{theorem}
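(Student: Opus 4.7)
The plan is to unwind the definition of $\phi_I$-$r$-ideal directly and then transport everything back to $R$. I would take $\bar{a}=a+I$ and $\bar{b}=b+I$ in $R/I$ with $\bar{a}\bar{b}\in J/I-\phi_I(J/I)$ and $\mathrm{Ann}(\bar{a})=(0)$ in $R/I$, and aim to show $\bar{b}\in J/I$. Pushing this up to $R$, the containment $\bar a\bar b\in J/I$ gives $ab\in J$ (since $I\subseteq J$), and $\bar a\bar b\notin\phi_I(J/I)=(\phi(J)+I)/I$ gives $ab\notin\phi(J)+I$, and in particular $ab\notin\phi(J)$.

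The heart of the argument, and the place where the hypothesis $I\subseteq r(R)$ is doing real work, is lifting regularity of $\bar a$ to regularity of $a$. I would argue by contradiction: suppose some $0\neq x\in R$ satisfies $xa=0$. Then $\bar x\bar a=\bar 0$, and since $\mathrm{Ann}(\bar a)=(0)$ in $R/I$ we get $x\in I\subseteq r(R)$. But then $x$ is regular, while $xa=0$ forces $a\in\mathrm{Ann}(x)=(0)$, so $a=0$; this in turn makes $\bar a=0$ and $\mathrm{Ann}(\bar a)=R/I$, which is $(0)$ only in the degenerate case $R/I=0$ (excluded because $J/I$ is a proper ideal). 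Hence $a$ is regular in $R$.

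Having secured $ab\in J-\phi(J)$ with $a$ regular, I simply invoke the hypothesis that $J$ is a $\phi$-$r$-ideal of $R$ to conclude $b\in J$, and therefore $\bar b\in J/I$, finishing the proof.

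I expect the main obstacle — really the only nontrivial step — to be the passage from regularity in $R/I$ to regularity in $R$; everything else is bookkeeping with cosets. One small pitfall to watch is the case $\phi(J)=\emptyset$, in which $\phi_I(J/I)=\emptyset$ by definition, so the set difference $J/I-\phi_I(J/I)$ is just $J/I$; the same argument then proceeds with the condition $ab\notin\phi(J)$ vacuous, and no modification is needed.
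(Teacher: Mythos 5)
Your proposal is correct and follows essentially the same route as the paper: pass the hypothesis to $R$, lift regularity of $a+I$ to regularity of $a$ via $x\in I\subseteq r(R)$, and apply the $\phi$-$r$-ideal property of $J$. Your explicit handling of the degenerate case $a=0$ (using that $J/I$ proper forces $R/I\neq 0$) and of $\phi(J)=\emptyset$ just tidies up points the paper's proof leaves implicit.
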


\begin{proof}
	Let $a+I,b+I\in R/I$ such that $(a+I)(b+I)\in J/I-\phi_{I}(J/I)$ and $a+I$ is
	a regular element. Then we see $ab\in J-\phi(J).$ Also, since $a+I$ is a
	regular element, $a$ is a regular element. Indeed, if $a$ is non-regular,
	there is $0\neq x\in R$ such that $xa=0.$ Then we have $(a+I)(x+I)=0+I.$ Since
	$a+I$ is regular, $x+I=0+I,$ i.e., $x\in I\subseteq r(R).$ This gives us a
	contradiction as $0\neq a\in Ann(x)$. Thus, as $J$ is a $\phi-r$-ideal, $b\in
	J,$ so $b+I\in J/I.$
\end{proof}

\begin{proposition}
	Let $I$ be a $r$-ideal of $R$ contained in ideal $J.$ If $J/I$ is an $r$-ideal
	of $R/I,$ then $J$ is a $\phi-r$-ideal of $R.$
\end{proposition}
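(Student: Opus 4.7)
The plan is to reduce the statement to showing that $J$ is in fact an $r$-ideal of $R$, and then appeal to Proposition \ref{pro basic}(1), since $\phi_{\emptyset}\leq \phi$ guarantees that every $r$-ideal is automatically a $\phi$-$r$-ideal for any choice of $\phi$. So the substantive task is to prove: if $I$ is an $r$-ideal of $R$ with $I\subseteq J$ and $J/I$ is an $r$-ideal of $R/I$, then $J$ is an $r$-ideal of $R$.

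To establish this, I would take arbitrary $a,b\in R$ with $ab\in J$ and $a$ a regular element of $R$, with the aim of proving $b\in J$. Passing to the quotient, we have $(a+I)(b+I)=ab+I\in J/I$. The key step, which will be the main obstacle, is to verify that the coset $a+I$ is a regular element of $R/I$; once this is done, the $r$-ideal property of $J/I$ in $R/I$ forces $b+I\in J/I$, and since $I\subseteq J$ this yields $b\in J$, as desired.

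To handle the regularity of $a+I$, suppose $(a+I)(x+I)=0+I$ for some $x\in R$; then $ax\in I$. Here is where the hypothesis that $I$ is an $r$-ideal of $R$ enters decisively: since $a$ is regular in $R$, the $r$-ideal property of $I$ applied to $ax\in I$ gives $x\in I$, i.e.\ $x+I=0+I$. This shows $a+I$ is regular in $R/I$. This is essentially the same trick used in the proof of the preceding theorem, where an $r$-ideal hypothesis on $\phi(I)$ (or on $I$ in the quotient setting) is what allows one to lift regularity through the quotient map.

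Putting the pieces together, $J$ is an $r$-ideal of $R$, and therefore a $\phi$-$r$-ideal of $R$ by Proposition \ref{pro basic}(1) applied to the inequality $\phi_{\emptyset}\leq \phi$. I do not anticipate any hidden subtleties beyond the regularity-lifting step; the rest is purely formal manipulation of cosets and of the containment $I\subseteq J$.
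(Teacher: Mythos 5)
Your argument is correct and rests on the same essential step as the paper's proof: regularity of $a$ passes to the coset $a+I$ in $R/I$ precisely because $I$ is an $r$-ideal, after which the $r$-ideal property of $J/I$ together with $I\subseteq J$ yields $b\in J$. The only difference is organizational: the paper verifies the $\phi$-$r$-ideal condition directly, splitting into the cases $ab\in I$ and $ab\in J-I$, whereas you prove the stronger fact that $J$ is an $r$-ideal of $R$ (the case split is indeed unnecessary, since $ab+I\in J/I$ either way) and then conclude via $\phi_{\emptyset}\leq\phi$ and Proposition \ref{pro basic}(1), which also makes explicit that $\phi$ plays no role in the conclusion.
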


\begin{proof}
	Choose $x,y\in R$ such that $xy\in J-$ $\phi(J)$ which $x$ is a regular
	element. Then we have 2 cases:
	
	Case 1: Let $xy\in I.$ Then as $I$ is a $r$-ideal, we get $y\in I\subseteq J,
	$ as desired.
	
	Case 2: Let $xy\in J-I.$ This implies that $xy+I=(x+I)(y+I)\in J/I$ . Also, as
	$x$ is a regular element, $x+I$ is regular. Indeed, if $x+I$ is non-regular,
	there is an element $0\neq a+I\in R/I$ such that $(x+I)(a+I)=0+I.$ This means
	that $xa\in I.$ But as $x$ regular and $I$ is $r$-ideal, we see $a\in I,$ a
	contradiction. Therefore, since $J/I$ is an $r$-ideal of $R/I$ and $x+I$ is
	regular, we see $y+I\in J/I,$ so $y\in J.$\bigskip
\end{proof}

Let $R$ be a commutative ring with nonzero identity and $M$ be an $R$-module.
Then the idealization, $R(+)M=\{(a,m):a\in R,m\in M\}$ is a commutative ring
with componentwise addition and the multiplication $(a,m)(b,n)=(ab,an+bm)$ for
each $a,b\in R$ and $m,n\in M.$ In addition, if $J\ $is an ideal of $R\ $and
$N\ $is a submodule of $M,\ $then $J(+)N\ $is an ideal of $R(+)M\ $if and only
if $JM\subseteq N\ $\cite{AnWi}. Also note that $z(R(+)M)=\{(r,m):r\in
z(R)\cup z(M)\},\ $where $z(M)=\{r\in R:rm=0\ $for some $0\neq m\}\ $%
\cite{AnWi}.

\begin{theorem}
	\label{the ide}Let $M$ be an $R$-module such that $z(R)=z(M)$. Let $\psi
	_{1}:L(R)\rightarrow L(R)\cup\{\emptyset\}$ and $\psi_{2}:L(R(+)M)\rightarrow
	L(R(+)M)\cup\{\emptyset\}$ be two function such that $\psi_{2}(I(+)M)=\psi
	_{1}(I)(+)M$ for a proper ideal $I$ of $R.$ If $I(+)M$ is a $\psi_{2}-r$-ideal
	of $R(+)M,$ then$\ I$ is a $\psi_{1}-r$-ideal of $R$.
\end{theorem}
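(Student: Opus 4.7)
The plan is to lift a witness of failure in $R$ up to $R(+)M$ and use the hypothesis there. Specifically, assume $I(+)M$ is a $\psi_2$-$r$-ideal of $R(+)M$, and pick $a,b\in R$ with $ab\in I-\psi_1(I)$ and $\mathrm{Ann}(a)=(0)$; the goal is to conclude $b\in I$.

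First I would form the product $(a,0)(b,0)=(ab,0)$ in $R(+)M$ and verify the two hypotheses needed to apply the $\psi_2$-$r$-ideal property at this element. Membership in $I(+)M$ is automatic since $ab\in I$. For the non-membership $(ab,0)\notin\psi_2(I(+)M)=\psi_1(I)(+)M$, note that if $(ab,0)$ did lie in $\psi_1(I)(+)M$ then its first coordinate $ab$ would lie in $\psi_1(I)$, contradicting $ab\notin\psi_1(I)$.

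The main step, where the hypothesis $z(R)=z(M)$ enters, is checking that $(a,0)$ is a regular element of $R(+)M$. By the description recalled just before the theorem, $z(R(+)M)=\{(r,m):r\in z(R)\cup z(M)\}$; so $(a,0)$ is a zerodivisor in $R(+)M$ iff $a\in z(R)\cup z(M)$. Since $\mathrm{Ann}(a)=(0)$ gives $a\notin z(R)$, and the assumption $z(R)=z(M)$ upgrades this to $a\notin z(M)$, we conclude $a\notin z(R)\cup z(M)$, i.e., $(a,0)$ is regular in $R(+)M$.

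With both hypotheses in hand, the $\psi_2$-$r$-ideal property of $I(+)M$ applied to $(a,0)(b,0)\in I(+)M-\psi_2(I(+)M)$ gives $(b,0)\in I(+)M$, and reading off the first coordinate yields $b\in I$, as required. The only subtle point is the need for both halves of the equality $z(R)=z(M)$: without $z(M)\subseteq z(R)$ (or at least $a\notin z(M)$), an element that is regular in $R$ could still become a zerodivisor in $R(+)M$ via some $0\neq m\in M$ with $am=0$, breaking the lift; the hypothesis is tailored precisely to rule this out.
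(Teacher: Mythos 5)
Your proposal is correct and follows essentially the same route as the paper's proof: lift the pair to $(a,0)(b,0)\in I(+)M-\psi_2(I(+)M)$, use $z(R(+)M)=\{(r,m):r\in z(R)\cup z(M)\}$ together with $z(R)=z(M)$ to see $(a,0)$ is regular, and then apply the $\psi_2$-$r$-ideal hypothesis to get $(b,0)\in I(+)M$, hence $b\in I$. Your version simply spells out the non-membership check and the role of both inclusions in $z(R)=z(M)$ more explicitly than the paper does.
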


\begin{proof}
	Let $a,b\in R$ such that $ab\in I-$ $\psi_{1}(I)$ and $a$ is regular. Since
	$\psi_{2}(I(+)M)=\psi_{1}(I)(+)M,$ we have $(a,0)(b,0)\in I(+)M-\psi
	_{2}(I(+)M).$ Also, since $z(R)=z(M),$ it is clear that $(a,0)$ is a regular
	element of $R(+)M.$ This means that $(b,0)\in I(+)M,$ so $b\in I,$ as needed.
\end{proof}

\bigskip Let $S$ be a multiplicatively closed subset of $R$. Then
$S^{-1}R:=\{\frac{r}{s}:r\in R,s\in S\}$ is called the localization of $R$ at
the multiplicatively closed set $S$ and it is denoted by $R_{S}.$ Let
$\phi:L(R)\rightarrow L(R)\cup\{\emptyset\}$ be a function. Define $\phi
_{S}:L(R_{S})\rightarrow L(R_{S})\cup\{\emptyset\}$ by $\phi_{S}(J)=S^{-1}%
\phi(J\cap R)$ for every ideal $J$ of $R_{S}$ and $\phi_{S}(J)=\emptyset$ if
$\phi(J\cap R)=\emptyset.$ Notice that $\phi_{S}(J)\subseteq J.$

\begin{theorem}
	\label{the loc}Let $\phi:L(R)\rightarrow L(R)\cup\{\emptyset\}$ be a function
	and $S$ be a multiplicatively closed subset of $R$ such that $S\subseteq
	r(R).\ $Suppose that $S\cap I=\emptyset$ for an ideal $I$ of $R.$ If $I$ is a
	$\phi-r$-ideal and $\phi(I)_{S}\subseteq\phi_{S}(I_{S}),$ then the followings
	are hold:
	
	\begin{enumerate}
		\item $I_{S}$ is a $\phi_{S}-r$-ideal of $R_{S}.$
		
		\item If $I_{S}\neq\phi(I)_{S},$ then $I_{S}\cap R\subseteq zd(R).$
	\end{enumerate}
\end{theorem}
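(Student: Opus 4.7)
For (1), I fix $\alpha=a/s,\beta=b/t\in R_S$ with $\alpha\beta\in I_S-\phi_S(I_S)$ and $\alpha$ regular in $R_S$, and aim to show $\beta\in I_S$. Clearing denominators, some $w\in S$ satisfies $wab\in I$. The regularity of $\alpha$, together with $S\subseteq r(R)$, propagates down to $a$: any $x\in R$ with $xa=0$ would satisfy $(x/1)(a/s)=0$ in $R_S$, hence $x/1=0$, hence $ux=0$ for some $u\in S$, and then $u\in r(R)$ forces $x=0$. Next, the inclusion $\phi(I)_S\subseteq\phi_S(I_S)$ combined with $\alpha\beta\notin\phi_S(I_S)$ forces $wab\notin\phi(I)$, for otherwise $ab/(st)=wab/(wst)$ would lie in $\phi(I)_S\subseteq\phi_S(I_S)$. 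Then $(wa)b=wab\in I-\phi(I)$ with $wa$ regular, and the $\phi$-$r$-ideal hypothesis on $I$ yields $b\in I$, i.e., $\beta\in I_S$.

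For (2), I would argue by contrapositive. Take $r\in I_S\cap R$ and assume for contradiction that $r$ is regular in $R$. Then $r/1\in I_S$ gives some $w\in S$ with $wr\in I$, and $wr$ is regular since both $w,r\in r(R)$. Consider the factorization $(wr)\cdot 1_R=wr\in I$: if $wr\notin\phi(I)$, the $\phi$-$r$-ideal property forces $1_R\in I$, contradicting properness of $I$. Hence $wr\in\phi(I)$, and consequently $r/1\in\phi(I)_S$.

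To finish, I would combine the previous step with the hypothesis $I_S\neq\phi(I)_S$. Picking $y/s\in I_S-\phi(I)_S$ (which supplies some $u\in S$ with $uy\in I$ but no $u'\in S$ with $u'y\in\phi(I)$), I multiply: $(wr)(uy)=wu(ry)\in\phi(I)$, so $ry/1\in\phi(I)_S$. The principal obstacle I anticipate is to strip the regular factor $r$ away from $ry$ inside $\phi(I)_S$ so as to conclude $y/1\in\phi(I)_S$, contradicting the choice of $y$, since $\phi(I)$ is not hypothesized to be an $r$-ideal of $R$. I expect to overcome this by rerouting through $R_S$ via Part~(1): the proposition following Theorem~\ref{the cha}, applied to the $\phi_S$-$r$-ideal $I_S$, yields $I_S-\phi_S(I_S)\subseteq zd(R_S)$, and combined with the correspondence $r\in r(R)\Leftrightarrow r/1\in r(R_S)$ (available because $S\subseteq r(R)$) and the bridging inclusion $\phi(I)_S\subseteq\phi_S(I_S)$, this should seal the contradiction.
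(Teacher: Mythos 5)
Your part (1) is correct and is essentially the paper's own argument: clear denominators, pass regularity of $a/s$ down to $a$ (you justify this carefully via $S\subseteq r(R)$, a point the paper leaves implicit), use $\phi(I)_S\subseteq\phi_S(I_S)$ to see that the representative product lies outside $\phi(I)$, and apply the $\phi$-$r$ hypothesis; your choice of the regular factor $wa$ even yields $b\in I$ directly, where the paper settles for $s_1b\in I$.

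Part (2) contains a genuine gap, and it is exactly the obstacle you flagged; your proposed rescue does not close it. The opening dichotomy (a regular $r\in I_S\cap R$ forces $wr\in\phi(I)$, hence $r/1\in\phi(I)_S$) is fine and matches the paper. But the rerouting through Part (1) gives you nothing new: the proposition following Theorem \ref{the cha} yields $I_S-\phi_S(I_S)\subseteq zd(R_S)$, and the only regular element of $R_S$ you have is $r/1$, so applying it only tells you $r/1\in\phi_S(I_S)$, which you already know from $r/1\in\phi(I)_S\subseteq\phi_S(I_S)$. The bridging inclusion runs in the wrong direction for your purposes: membership in $\phi_S(I_S)=S^{-1}\phi(I_S\cap R)$ never implies membership in $\phi(I)_S$, since $\phi_S(I_S)$ may be strictly larger. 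Most importantly, the contradiction you need is $y/s\in\phi(I)_S$, and nothing in your toolkit strips the regular factor from $ry/1\in\phi(I)_S$: the $\phi_S$-$r$ property of $I_S$ only ever concludes membership in $I_S$ (which you already have for $y/s$), never membership in $\phi_S(I_S)$ or $\phi(I)_S$; to cancel a regular factor inside $\phi(I)_S$ you would need $\phi(I)$ (or $\phi(I)_S$) to be an $r$-ideal, which is precisely not assumed here (contrast Proposition \ref{pro zd}, where that extra hypothesis is imposed).

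Note also that the paper's proof of (2) never meets this stripping problem, because it takes a different exit: it establishes the elementwise containment $I_S\cap R\subseteq zd(R)\cup(\phi(I)_S\cap R)$ and then concludes from $I_S\neq\phi(I)_S$ (in effect, if $I_S\cap R\subseteq\phi(I)_S\cap R$ then localizing gives $I_S=\phi(I)_S$), rather than trying to push a chosen element $y/s$ of $I_S-\phi(I)_S$ into $\phi(I)_S$. (That final inference in the paper is itself quite terse, since $zd(R)$ is not an ideal, but it is a different route from yours and avoids the cancellation you would need.) If you insist on your multiplicative finish, you must exhibit, for each $i\in I$, some $u\in S$ with $ui\in\phi(I)$; the only mechanism the hypotheses provide for certifying membership in $\phi(I)$ is the by-elimination trick you used for $wr$ (for instance, from $r(s+i)\in I$ and $s\notin I$ one gets $rI\subseteq\phi(I)$), but this always produces the factor $r\notin S$ rather than a factor from $S$, so it still does not give $i/1\in\phi(I)_S$.
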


\begin{proof}
	(1): Choose $a/s,b/t\in R_{S}$ such that $\frac{a}{s}\frac{b}{t}\in I_{S}%
	-\phi_{S}(I_{S})$ and $\frac{a}{s}$ is regular. Then there is $s_{1}\in S$
	such that $s_{1}ab\in I.$ Also, it is clear that since $\frac{a}{s}$ is
	regular, $a$ is regular. On the other hand, $\frac{a}{s}\frac{b}{t}\notin
	\phi_{S}(I_{S})$ implies that $s_{2}ab\notin\phi_{S}(I_{S})\cap R$\ for all
	$s_{2}\in S.$ By $\phi(I)_{S}\subseteq\phi_{S}(I_{S})$, we have $s_{2}%
	ab\notin\phi(I)$ for all $s_{2}\in S.$ Thus we can say $as_{1}b\in I-\phi(I).$
	Then as $I$ is $\phi-r$-ideal, $s_{1}b\in I.$ Hence $\frac{b}{t}=\frac{s_{1}%
		b}{s_{1}t}\in I_{S},$ as required.
	
	(2): Set $a\in I_{S}\cap R.$ Then there is $s\in S$ such that $as\in I.$ Also,
	$s\notin I$ by $S\cap I=\emptyset$. If $as\notin\phi(I),$ then we obtain$\ a$
	is non-regular since $s\notin I.$ If $as\in\phi(I),$ then $a\in\phi(I)_{S}\cap
	R.$ Thus $I_{S}\cap R\subseteq zd(R)\cup(\phi(I)_{S}\cap R).$ Then by our
	assumption $I_{S}\neq\phi(I)_{S},$ we obtain $I_{S}\cap R\subseteq zd(R).$
\end{proof}

Let $\phi_{i}:L(R_{i})\rightarrow L(R_{i})\cup\{\emptyset\}$ be a function for
each $i=1,2,\ldots,m$ and $R=R_{1}\times R_{2}\times\cdots\times R_{m}%
,\ $where $R_{1},R_{2},\ldots,R_{m}\ $are commutative rings$.\ $Then
$\phi_{\times}:L(R)\rightarrow L(R)\cup\{\emptyset\}$, defined by
$\phi^{\times}(I_{1}\times I_{2}\times\cdots\times I_{m})=\phi_{1}%
(I_{1})\times\phi_{2}(I_{2})\times\cdots\times\phi_{m}(I_{m}),\ $becomes a
function.\ Particularly, if each $\phi_{i}$ is the function $\phi_{n},\ $then
$\phi^{\times}-r$-ideal is denoted by $\phi_{n}^{\times}-r$-ideal.

\begin{theorem}
	Let $m\geq1\ $and $n\geq2\ $be two integers. Suppose that $R=R_{1}\times
	R_{2}\times\cdots\times R_{m},\ $where $R_{1},R_{2},\ldots,R_{m}\ $are
	commutative rings.\ The following statements are equivalent:
	
	(i) $R_{1},R_{2},\ldots,R_{m}\ $are total quotient rings.
	
	(ii)\ Every proper ideal of $R\ $is an $\phi_{n}^{\times}-r$-ideal.
\end{theorem}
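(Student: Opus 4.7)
The plan is to exploit the fact that every ideal of $R=R_1\times\cdots\times R_m$ has the product form $I=I_1\times\cdots\times I_m$ with each $I_i$ an ideal of $R_i$, so that $\phi_n^{\times}(I)=I_1^n\times\cdots\times I_m^n$. In addition, for $a=(a_1,\ldots,a_m)\in R$ one has $\mathrm{Ann}(a)=\mathrm{Ann}(a_1)\times\cdots\times\mathrm{Ann}(a_m)$, so $a$ is regular in $R$ if and only if every $a_i$ is regular in $R_i$. The whole argument will be reduced to these two componentwise translations.

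For the direction $(i)\Rightarrow(ii)$, if every $R_i$ is a total quotient ring, then a regular element in $R_i$ is necessarily a unit, so any regular $a=(a_1,\ldots,a_m)\in R$ is a unit. Hence for any proper ideal $I$ of $R$ and any $a,b\in R$ with $a$ regular and $ab\in I$, we immediately obtain $b=a^{-1}(ab)\in I$. Thus $I$ is actually an ordinary $r$-ideal, and therefore a $\phi_n^{\times}$-$r$-ideal by Proposition \ref{pro basic}(1). Here we never even use the part ``$ab\notin\phi_n^{\times}(I)$'' in the definition.

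For $(ii)\Rightarrow(i)$, I would argue by contraposition. Suppose some $R_j$ is not a total quotient ring and pick a regular non-unit $a_j\in R_j$. Consider the proper ideal
\[
I=R_1\times\cdots\times R_{j-1}\times\langle a_j\rangle\times R_{j+1}\times\cdots\times R_m
\]
together with $a=(1,\ldots,1,a_j,1,\ldots,1)$ and $b=(0,\ldots,0,1,0,\ldots,0)$, the nontrivial entries sitting in position $j$. The regularity criterion above gives that $a$ is regular in $R$. Also $ab$ has $j$-th entry $a_j\in\langle a_j\rangle$ and $0$ elsewhere, so $ab\in I$, while $b\notin I$ because $1\notin\langle a_j\rangle$ (as $a_j$ is not a unit).

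The crux, and the only place where the hypothesis $n\geq 2$ is actually used, is to verify $ab\notin\phi_n^{\times}(I)$. Componentwise this amounts to $a_j\notin\langle a_j\rangle^n=\langle a_j^n\rangle$: if $a_j=ca_j^n$ for some $c\in R_j$, then regularity of $a_j$ allows cancellation to give $ca_j^{n-1}=1$, so $a_j$ would be a unit, contrary to the choice of $a_j$. This exhibits a proper ideal $I$ of $R$ that fails the $\phi_n^{\times}$-$r$-ideal condition, completing the contrapositive. Everything other than this cancellation step is routine bookkeeping with the product decomposition; the main (and essentially only) obstacle is this short algebraic observation, which is also exactly what forces the exponent restriction $n\geq 2$.
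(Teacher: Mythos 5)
Your proof is correct and follows essentially the same strategy as the paper: both directions reduce to the componentwise structure of $R$ (an element is regular iff each coordinate is, and $\phi_n^{\times}$ acts slotwise) together with the cancellation $a=ca^{n}\Rightarrow a$ is a unit, which is exactly where $n\geq 2$ enters in both arguments. The only difference is organizational: you use $I=R_1\times\cdots\times\langle a_j\rangle\times\cdots\times R_m$ with the idempotent witness $b$ and verify $ab\notin\phi_n^{\times}(I)$ directly, whereas the paper takes $(a^{2})\times R_2\times\cdots\times R_m$ with $x=y=(a,1,\ldots,1)$ and splits into two cases according to whether $xy\in\phi_n^{\times}(I)$, each case forcing $a$ to be a unit.
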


\begin{proof}
	$(i)\Rightarrow(ii):\ $Suppose that $R_{1},R_{2},\ldots,R_{m}\ $are total
	quotient rings so that every element of $R\ $is either zerodivisor or unit.
	Thus every ideal ideal of $R\ $is a trivially $\phi_{n}^{\times}-r$-ideal.
	
	$(ii)\Rightarrow(i):\ $Assume that every ideal of $R\ $is an $\phi_{n}%
	^{\times}-r$-ideal. Let $a\in R_{1}\ $be a regular element. Now, we will show
	that $a\ $is unit. Suppose to the contrary. Put $I=(a^{2})\times R_{2}%
	\times\cdots\times R_{n}.\ $Then $I\ $is proper so is a $\phi_{n}^{\times}%
	-r$-ideal by assumption. Now, put $x=(a,1,1,\ldots,1)$ and $y=(a,1,\ldots
	,1).\ $Then $xy=(a^{2},1,1,\ldots,1)\in I\ $and $x$ is regular. If $xy\in
	\phi_{n}^{\times}(I),\ $we conclude that $(a^{2},1,1,\ldots,1)\in
	(a^{2n})\times R_{2}\times\cdots\times R_{n}$ so that $a^{2}=a^{2n}t$ for some
	$t\in R.\ $As $a$ is regular, we conclude that $1=a(a^{2n-3}t)$ and thus $a$
	is unit. So assume that $xy\in I-\phi_{n}^{\times}(I).\ $Since $I\ $is
	$\phi_{n}^{\times}-r$-ideal and $x$ is regular, we conclude that $y\in I\ $and
	so $a=a^{2}x$ for some $x\in R.\ $Since $a$ is regular, we have $1=ax$, that
	is, $a$ is unit. Thus $R_{1}\ $is a total quotient ring. One can similarly
	show that $R_{k}\ $is total quotient ring for each $2\leq k\leq m.$
\end{proof}


\end{document}